\documentclass[12pt,a4paper]{article}
\usepackage[affil-it]{authblk}
\usepackage[cm]{fullpage}
\usepackage{comment}
\usepackage{natbib}
\usepackage{xcolor}
\usepackage{graphicx}
\usepackage{tabularx}
\usepackage{threeparttable} 
\usepackage{etoolbox} 
\appto\TPTnoteSettings{\footnotesize} 
\usepackage[hidelinks]{hyperref}
\usepackage{multirow}


\usepackage{algorithm}
\usepackage{algpseudocode}

\algnewcommand{\algorithmicgoto}{\textbf{go to}}%
\algnewcommand{\Goto}[1]{\algorithmicgoto~\ref{#1}}%

\usepackage{capt-of} 
\usepackage{amsthm}
\theoremstyle{plain}
\newtheorem{theorem}{Theorem}
\newtheorem{lemma}{Lemma}

\theoremstyle{definition}

\newtheorem{proposition}{Proposition}

\usepackage{booktabs}
\usepackage{amsmath}
\usepackage{amssymb}
\usepackage{bm} 
\usepackage{mathrsfs} 
\usepackage{bbm}
\usepackage{tikz}
\usetikzlibrary{shapes,snakes} 
\usetikzlibrary{arrows.meta}
\newcounter{counter}

\definecolor{GRAY}{gray}{0.97}
\usepackage{mathtools} 
\allowdisplaybreaks
\usepackage[symbol*]{footmisc}
\DefineFNsymbolsTM{myfnsymbols}{
	\textasteriskcentered *
}%
\setfnsymbol{myfnsymbols}
\providecommand{\keywords}[1]{\textbf{\textit{Keywords---}} #1}
\newcommand\floor[1]{\displaystyle\lfloor#1\rfloor}
\newcommand\ceil[1]{\lceil #1 \rceil}
\newcommand\bfloor[1]{\displaystyle\left\lfloor#1\right\rfloor}
\newcommand\bceil[1]{\displaystyle\left\lceil #1 \right\rceil}
\newcommand\fdiv[2]{\lfloor\frac{#1}{#2}\rfloor}
\newcommand\cdiv[2]{\lceil \frac{#1}{#2}\rceil}
\newcommand\bfdiv[2]{\displaystyle\left\lfloor\dfrac{#1}{#2}\right\rfloor}
\newcommand\bcdiv[2]{\displaystyle\left\lceil \dfrac{#1}{#2}\right\rceil}
\usepackage{ragged2e}

\title{An analytical bound on the fleet size in vehicle routing problems: a dynamic programming approach}

\author[1]{Ali Eshragh}
\author[1]{Rasul Esmaeilbeigi\thanks{E-mail address: \textit{rasul.esmaeilbeigi@uon.edu.au}}}
\author[2]{Richard Middleton}
\affil[1]{School of Mathematical and Physical Sciences, The University of Newcastle, Australia}
\affil[2]{School of Electrical Engineering and Computing, The University of Newcastle, Australia}

\begin{document}
\maketitle

\abstract

We present an analytical upper bound on the number of required vehicles for vehicle routing problems with split deliveries and any number of capacitated depots. We show that a fleet size greater than the proposed bound is not achievable based on a set of common assumptions. This property of the upper bound is proved through a dynamic programming approach. We also discuss the validity of the bound for a wide variety of routing problems with or without split deliveries.

\noindent\keywords Split delivery; Number of vehicles; Multiple depots; Dynamic programming

\section{Introduction}
\label{Sec_Introduction}

Vehicle routing problems constitute an important class of combinatorial optimization problems. We develop an analytical upper bound on the number of required vehicles for split delivery routing problems with any number of capacitated depots. The upper bound is based on the following four assumptions:
\begin{itemize}
	\setlength\itemsep{0.1cm}
	\item \texttt{A1}:~Each vehicle delivers to exactly one depot.
	\item \texttt{A2}:~Vehicles are homogeneous.
	\item \texttt{A3}:~The aggregate load for each pair of vehicles exceeds the capacity of one vehicle.
	\item \texttt{A4}:~All parameters as well as the load for each vehicle are integers.
\end{itemize}

Without loss of generality, we assume that goods are collected from the demand points (customers with a pickup demand) and delivered to a number of depots. Note that there is an equivalence between pure distribution and pure collection since we can reverse the routes so that collection becomes distribution and vice versa \citep{toth2014}.

Assumption \texttt{A1} is quite common as many routing problems assume that each route begins and ends at the same depot. It is also valid when vehicles start and end their routes at different depots, but each of them makes a delivery to exactly one depot. This includes problems where a vehicle can visit any number of nodes (depots or demand points) to pick up goods and deliver to exactly one depot. Assumption \texttt{A2} is valid for a large proportion of the routing problems where the fleet of vehicles is homogeneous. In heterogeneous routing problems, we may have different types of vehicles that can differ in capacity, variable and fixed costs, speeds, and the customers that they can access \citep{toth2014,Kocc2016}. More precisely, the set of vehicles is partitioned into a number of subsets of homogeneous vehicles each corresponding to a vehicle type. In these problems, we are interested to know the number of required vehicles for each vehicle type. Given that vehicles belonging to a subset are homogeneous, we may employ the proposed upper bound for each subset of vehicles separately by assigning the total accessible demands to them.

Assumption \texttt{A3} is typically valid for routing problems where the cost/time or distance matrix satisfies the triangle inequality. If the total load for any pair of vehicles is less than or equal to the capacity of each vehicle, then we can simply combine the loads of the two vehicles and use only one vehicle to deliver them. The triangle inequality implies that the new route is not longer than the sum of the lengths of the previous two routes. Note that in the case where the triangle inequality does not hold, we sometimes can obtain an equivalent instance of the routing problem that satisfies the triangle inequality. This can be done by simply replacing the actual distance between each pair of nodes with the length of a shortest path connecting them (i.e., finding the metric closure of the network).

Assumption \texttt{A4} is valid for many real-world routing problems where a fractional number representing demand or capacity can be rounded to an integer. When parameters of the problem are all integers, the underlying routing problem often has an optimal solution in which the load for each vehicle is an integer. For example, \cite{Archetti2006tabu} consider a split delivery routing problem and show that when demands and capacity of each vehicle are integers, then there exists an optimal solution in which vehicle loads are all integers. Furthermore, when split deliveries are not allowed and demands are all integers, then the vehicle loads in every feasible solution are all integers.

We formulate two maximization problems based on Assumptions \texttt{A1}-\texttt{A4} and obtain a closed-form solution for each of them. The optimal objective value of these optimization problems give a tight upper bound for the fleet size in single-depot and multiple-depot routing problems, respectively. We refer to such a bound as ``tight'' in the sense that a fleet size greater than the bound is not achievable, although the bound itself may also not be achievable in some routing problems.

The second optimization problem represents the general case where we have any number of depots. We use a dynamic programming approach to obtain an analytical solution for this problem. The upper bound is useful as it can be employed to design solution algorithms for relevant routing problems. Furthermore, vehicle-flow (often three-index) formulations of routing problems require the maximum number of vehicles a priori \citep{toth2014}. The number of decision variables and constraints of these formulations are greatly affected by the upper bound on the number of vehicles used in the optimization. Consequently, it is important to specify a good upper bound to reduce the computational time. \cite{Chandran2008} solve the linear programming relaxation of a mixed integer linear program to obtain a good upper bound on the number of vehicles. This upper bound is utilized to reduce the number of decision variables of a routing problem in a preprocessing approach. \cite{Archetti2011} present an upper bound on the number of vehicles for a routing problem and use this upper bound in a column generation approach to solve the problem to optimality.

Our proposed upper bound can be computed very efficiently which makes it desirable for implementation within exact methods or as a preprocessing step. Although we focus on the routing problems with split deliveries, the proposed bound is also valid for the problems where split deliveries are not allowed. This can be realized by considering the upper bound as the optimal objective value of a maximization problem with the objective function being the number of vehicles used. Allowing split deliveries can be seen as a relaxation of this maximization problem which gives a valid upper bound for the corresponding routing problem. Furthermore, we point out that our proposed upper bound is valid for routing problems in which Assumption~\texttt{A1} does not hold. The reason is that removing Assumption~\texttt{A1} provides more degrees of freedom for the corresponding routing problem (removing a restriction on the number of deliveries that a vehicle can make). However, our proposed upper bound might not be the best possible bound that can be obtained for such routing problems based on our assumptions.

Fleet size (or often the number of routes) is a defining characteristic of a vehicle routing problem. Various routing problems have been defined and solved in the literature over the past six decades. In these works, the fleet size has almost always been fixed to the minimum possible number of routes \citep{uchoa2017new}. In other cases, only a trivial upper bound on the fleet size has been proposed.

In general, there is no relation between the number of vehicles used in a feasible solution and the quality of that solution. As argued by \cite{uchoa2017new}, the number of routes should not be fixed a priori as this is also the case in the original work of \cite{Dantzig1959}. To the best of our knowledge, the present paper is the first attempt to propose a non-trivial upper bound that is readily applicable to a wide variety of routing problems.

Another significance of our work is the development of closed-form upper bounds. It is possible to reformulate the non-linear formulations presented in this paper as a mixed integer linear program by using two-dimensional vehicle-index variables. However, the closed-form bounds developed in the paper (the optimal objective value of such linear formulations) can be computed in $\mathcal{O}(n\log n)$, where $n$ is the number of nodes. This improvement is particularly advantageous for very large instances of vehicle routing problems with thousands of demand points \citep[see e.g.,][]{campbell2004decomposition,qi2012spatiotemporal}. The bounds could also prove useful in more recent solution approaches to node routing problems such as quantum computing \citep{feld2019hybrid}.

The rest of the paper is organized as follows. In Section~\ref{Sec_Notation} we review the notations and definitions that are used throughout the paper. The upper bounds on the number of vehicles for routing problems with single and multiple depots are developed in Section~\ref{Sec_SDVRP} and Section~\ref{Sec_Multi}, respectively. Finally, we give some concluding remarks in Section~\ref{Conclusion}.

\section{Notation}
\label{Sec_Notation}

We denote the set of all non-negative integers by $\mathbb{Z}_0$, the set of all positive integers by $\mathbb{Z}_+$ and the set of all integers by $\mathbb{Z}$. For a given set $\mathcal{S}$, we use $|\mathcal{S}|$ to denote its cardinality. The floor and ceiling functions are denoted by $\floor{\cdot}$ and $\ceil{\cdot}$, respectively. We sometimes take advantage of the inequality $r \le \ceil{r} < r+1$ for a real number $r$, and refer to it as the \textit{ceiling property}.

We denote the set of demand points by $\mathcal{I}$ and the set of potential locations for facilities (depots) by $\mathcal{J}$. We often use $n$ to denote the number of potential facilities, that is, $n=|\mathcal{J}|$. Demand point $i \in \mathcal{I}$ has a demand $d_i \in \mathbb{Z}_0$ that must be satisfied. The total demand is denoted by $\varDelta = \sum_{i \in \mathcal{I}} d_i$. We assume that transportation of demand from demand points to the depots is carried out by a fleet of homogeneous vehicles each with capacity $q \in \mathbb{Z}_+$. The ratio $\frac{q+1}{2}$ is frequently used in the paper and therefore we use $Q$ to denote this ratio, that is,\[
Q\coloneqq \frac{q+1}{2}\cdot    
\] The capacity of depot $i\in\mathcal{J}$ is denoted by $c_i \in \mathbb{Z}_+$. We assume, without loss of generality, that $c_1 \ge \dots \ge c_n$.

\section{Single depot}
\label{Sec_SDVRP}

When there is only one depot (i.e., $n = 1$ and $\mathcal{I} \cap \mathcal{J} = \emptyset$), we may consider $\sum_{i \in \mathcal{I}} \cdiv{d_i}{q}$ as a trivial upper bound on the number of vehicles \citep{lee2006,Archetti2008}. The upper bounds $\cdiv{2\varDelta}{q}$ and $2\cdiv{\varDelta}{q}$ have also been proposed by \cite{labbe1991} and \cite{Archetti2011}, respectively. In this section, we develop an optimization problem to obtain a tight upper bound based on Assumptions \texttt{A1}-\texttt{A4}. Let $m$ be the number of required vehicles and $v_i$ denote the load of vehicle $i \in \{1,\dots,m\}$. We are interested to know the optimal objective value of the following single-depot formulation (\texttt{SDF}):
\begin{align}
\max ~~ &m && \label{Single_depot_obj}\\
\text{ s.t.~~}\ & \sum_{i=1}^{m} v_{i} = \varDelta && \label{Single_depot_1}\\
& v_{i} + v_{j} \ge q+1 &&   i,j \in \{1,\dots,m\}  \text{  with  } i \ne j \label{Single_depot_2}\\
& v_{i} \in \{1,\dots,q\} &&   i \in \{1,\dots,m\} \label{Single_depot_3}\\
& m \in\mathbb{Z}_0. && \label{Single_depot_4}
\end{align}
The objective of the \texttt{SDF} is to maximize the number of required vehicles, $m$. Constraint~\eqref{Single_depot_1} states that the total demand $\varDelta$ must be delivered to the depot by $m$ vehicles. Constraints~\eqref{Single_depot_2} are due to Assumptions \texttt{A3} and \texttt{A4}. Constraints \eqref{Single_depot_3}~and~\eqref{Single_depot_4} specify domains of the decision variables.
\begin{proposition}
	\label{Proposition_Pi}
	For $\varDelta \in \mathbb{Z}_0$ and $q \in \mathbb{Z}_+$, the function $\pi \colon \mathbb{Z}_0 \to \mathbb{Z}_0$ defined by
	\[ \pi(\varDelta) \coloneqq \begin{cases}
	
	\bceil {\dfrac{\varDelta}{q}} &   \varDelta \le q \\[8pt]
	\bcdiv{\varDelta-q}{\lceil Q \rceil } + 1 &   \varDelta > q,
	\end{cases} \]
	gives the optimal value of the \texttt{SDF}.
\end{proposition}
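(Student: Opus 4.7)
The plan is to prove the proposition by (i) exhibiting a feasible solution to the \texttt{SDF} that attains $m=\pi(\varDelta)$ and (ii) arguing that no larger value of $m$ can be attained. I would first dispose of $\varDelta \le q$: $m = \lceil \varDelta/q \rceil$ is $0$ or $1$ and is trivially realized (empty solution or a single vehicle with load $\varDelta$), whereas any $m \ge 2$ is ruled out by~\eqref{Single_depot_2} because $v_1+v_2 \ge q+1 > q \ge \varDelta$ contradicts~\eqref{Single_depot_1}.

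For $\varDelta > q$, I would set $m^* \coloneqq \pi(\varDelta)$ and propose the candidate $v_1 = \varDelta - (m^*-1)\lceil Q \rceil$ and $v_i = \lceil Q \rceil$ for $i \ge 2$. The ceiling property applied to the definition of $m^* - 1$ yields $\varDelta - q \le (m^*-1)\lceil Q \rceil < \varDelta - q + \lceil Q \rceil$, which translates to $q+1-\lceil Q \rceil \le v_1 \le q$. Since $\lceil Q \rceil \le q$, this puts $v_1 \in \{1,\dots,q\}$, and the pairwise constraints hold because $v_1+\lceil Q \rceil \ge q+1$ and $2\lceil Q \rceil \ge q+1$.

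For the matching upper bound, I would take any feasible $(m,v_1,\dots,v_m)$ with $m \ge 2$ and use a pigeonhole observation as the key step: at most one $v_i$ can satisfy $v_i \le \lceil Q \rceil - 1$, since otherwise two such loads would sum to at most $2\lceil Q \rceil - 2 \le q$, violating~\eqref{Single_depot_2}. This leaves two cases: (A) all $v_i \ge \lceil Q \rceil$, giving $\varDelta \ge m\lceil Q \rceil$, or (B) exactly one load (say $v_1$) satisfies $v_1 \le \lceil Q \rceil-1$, which forces $v_j \ge q+1-v_1$ for $j \ne 1$ and hence $\varDelta \ge v_1 + (m-1)(q+1-v_1)$. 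A short rearrangement in either case (using $2\lceil Q \rceil \ge q+1$) should give $\varDelta > q + (m-2)\lceil Q \rceil$, which by the ceiling property is equivalent to $m - 1 \le \lceil(\varDelta-q)/\lceil Q \rceil\rceil$, i.e., $m \le m^*$.

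The main obstacle I anticipate is the algebra in case (B): the bound there simplifies to $(m-2)(q-v_1-\lceil Q \rceil) + (m-1) > 0$, and the first factor can dip to $-1$ when $q$ is even and $v_1 = \lceil Q \rceil - 1$. The saving grace is the $+(m-1)$ term, which is at least $1$ for $m \ge 2$ and rescues the inequality. Aside from this parity-sensitive check, the argument is a direct case split combined with the ceiling property, and the two directions together yield $\pi(\varDelta)$ as the optimal value of the \texttt{SDF}.
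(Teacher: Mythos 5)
Your proposal is correct and follows essentially the same route as the paper: the witness you construct ($v_i=\lceil Q\rceil$ for all but one vehicle, with the remainder $\varDelta-(m^*-1)\lceil Q\rceil$ on the last) is exactly the paper's optimal solution, and your pigeonhole observation that at most one load can fall below $\lceil Q\rceil$ is the same key fact the paper extracts by sorting the loads and deducing $v_{m-1}\ge\lceil Q\rceil$. The only difference is presentational: the paper folds your cases (A) and (B) into a single step by maximizing $\lfloor(\varDelta-(v_m+v_{m-1}))/v_{m-1}\rfloor+2$ over the constraints $v_{m-1}+v_m\ge q+1$ and $v_{m-1}\ge\lceil Q\rceil$, which sidesteps the parity-sensitive check you flag in case (B) (your handling of which is nonetheless correct, since $(m-2)(q+1-v_1-\lceil Q\rceil)+1\ge 1$ because $v_1\le\lceil Q\rceil-1$ forces the first factor to be nonnegative).
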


\begin{proof}
	It is clear that for $\varDelta = 0$ we do not need any vehicles. Due to Constraint~\eqref{Single_depot_2}, we can use only one vehicle for the case $1\le\varDelta \le q$, that is, $\pi(\varDelta)=1$. Therefore, let us suppose that $\varDelta > q$. Without loss of generality, we assume that vehicles are sorted such that $v_1 \ge \dots \ge v_m$. This implies that\[
	\varDelta - v_m = v_1 + \dots + v_{m-1} \ge (m-1)v_{m-1},
	\]
	and therefore $m \le \frac{\varDelta-v_m}{ v_{m-1} } + 1$.
	Given that $v_m+v_{m-1} \ge q+1$ and $v_m \le v_{m-1}$, we obtain $v_{m-1} \ge Q$. Since $m$ and $v_{m-1}$ take integer numbers, we can write $v_{m-1} \ge \left\lceil Q \right\rceil$ and
	\begin{align*}
	m & \le  \bfdiv{\varDelta-v_m}{ v_{m-1} } + 1 = \bfdiv{\varDelta-(v_m+v_{m-1})}{v_{m-1} } + 2. 
	\end{align*}
	The last inequality gives an upper bound on $m$. This upper bound is a function of $v_{m-1}$ and $v_m$ subject to $v_{m-1} \ge \left\lceil Q \right\rceil$, $v_{m-1}+v_m\ge q+1$ and $v_m \le v_{m-1}$. It is clear that the maximum of this upper bound is achieved if we substitute $v_m+v_{m-1}$ and $v_{m-1}$ with their lower bounds, that is, $v_m+v_{m-1}=q+1$ and $v_{m-1}= \lceil Q\rceil $. The constraint $v_m \le v_{m-1}$ is also satisfied. Thus, 
	\begin{align}
	m & \le \bfdiv{\varDelta-q-1}{ \left\lceil Q \right\rceil } + 2 = \bcdiv{\varDelta-q}{\lceil Q \rceil } + 1. \label{proof_single_depot_pi_final}
	\end{align}
	The last equality holds because $\fdiv{\alpha}{\beta}=\cdiv{\alpha+1}{\beta}-1$ for $\alpha\in\mathbb{Z}$ and $\beta\in\mathbb{Z}_+$ \citep{graham1994}. Therefore, any feasible solution of the \texttt{SDF} which employs \[m^\star \coloneqq \bcdiv{\varDelta-q}{\lceil Q \rceil } + 1\] vehicles is optimal. Let us define the solution
	\[ v_i^\star \coloneqq \begin{cases}
	\left\lceil Q \right\rceil &   i < m^\star \\
	\varDelta-\lceil Q\rceil \bcdiv{\varDelta-q}{\lceil Q\rceil} &   i = m^\star.
	\end{cases} \]
	It can be easily verified that Constraints \eqref{Single_depot_1} and \eqref{Single_depot_4} are satisfied by this solution. Now we use the ceiling property to show that Constraints \eqref{Single_depot_2} and \eqref{Single_depot_3} are also satisfied. Constraint \eqref{Single_depot_2} is satisfied since $v_i^\star + v_j^\star = $
	\[\begin{cases}
	\left\lceil Q \right\rceil + \left\lceil Q \right\rceil \ge q+1 &   i,j < m^\star \\[7pt]
	\varDelta-\lceil Q\rceil \left(\bcdiv{\varDelta-q}{\lceil Q\rceil}-1\right) > \varDelta-\lceil Q\rceil \left(\dfrac{\varDelta-q}{\lceil Q\rceil}\right) = q &   i < j = m^\star
	\end{cases} \]
	For Constraint \eqref{Single_depot_3}, we check the inequality $v^\star_i \le q$ as follows:
	\[ v_i^\star = \begin{cases}
	\left\lceil Q \right\rceil \le q &   i < m^\star \\
	\varDelta-\lceil Q\rceil \bcdiv{\varDelta-q}{\lceil Q\rceil} \le \varDelta-\lceil Q\rceil \left( \dfrac{\varDelta-q}{\lceil Q\rceil}\right)=q &   i = m^\star
	\end{cases} \]
	The inequality $v^\star_i \ge 1$ is satisfied since
	\[ v_i^\star = \begin{cases}
	\left\lceil Q \right\rceil \ge 1 &   i < m^\star \\
	\varDelta-\lceil Q\rceil \bcdiv{\varDelta-q}{\lceil Q\rceil} > \varDelta-\lceil Q\rceil \left( \dfrac{\varDelta-q}{\lceil Q\rceil} + 1\right) \ge 0 &   i = m^\star
	\end{cases} \]
	Hence Constraint \eqref{Single_depot_3} is also satisfied.
\end{proof}

\section{Multiple depots}
\label{Sec_Multi}

In this section, we assume that $n \ge 1$ and $\mathcal{I} \cap \mathcal{J} = \emptyset$. We introduce an optimization problem whose optimal solution provides a tight upper bound on the number of vehicles. Let the decision variable $x_i$ represent the quantity of demand delivered to depot $i\in \mathcal{J}$. The multi-depot formulation (\texttt{MDF}) can then be expressed as follows:
\begin{align}
\max ~~ & \sum_{i=1}^{n} \pi(x_i) && \label{Multi_depot_obj_0}\\
\text{ s.t.~~}\ & \sum_{i=1}^{n} x_{i} \le \varDelta && \label{Multi_depot_1_0}\\
& x_{i} \in \{0,\dots,c_i\} &&   i \in \{1,\dots,n\}. \label{Multi_depot_2_0}
\end{align}
Constraint~\eqref{Multi_depot_1_0} states that the total quantity delivered by vehicles must not exceed the total demand,~$\varDelta$. Constraints~\eqref{Multi_depot_2_0} specify domains of the decision variables. Observe that the decision variable $x_i$ can take the value zero. This means that it is not required that every depot has positive demand delivered. Consequently, the bound is valid whether the locations of the depots are predetermined or not.

The objective of the \texttt{MDF} is to maximize the number of required vehicles. The function $\pi(x_i)$ represents the maximum number of vehicles required to deliver $x_i$ units of demand to depot $i \in \mathcal{J}$. Because each vehicle can deliver to exactly one depot (by Assumption~\texttt{A1}), the objective function $\sum_{i=1}^{n} \pi(x_i)$ properly represents the total number of required vehicles. When there is only one depot, we know that the function $\pi$ introduced in Section~\ref{Sec_SDVRP} gives a tight upper bound on the number of vehicles. Therefore, the optimal value of the \texttt{MDF} gives a tight bound on the number of vehicles when $n$ depots are available. In the absence of Assumption~\texttt{A1}, the resulting optimal value of the \texttt{MDF} is still a valid upper bound. However, it might not be a tight upper bound for every class of instances.

We solve the \texttt{MDF} using a dynamic programming approach. Let $V^\star_j(\delta_j)$ be the maximum number of vehicles required to deliver $\delta_j \in \{0,\dots,\Delta\}$ units of demand to depots $1,\dots,j$. In other words,
\begin{align}
V^\star_j(\delta_j) \coloneqq \max_{x_1,\dots,x_j} ~~ & \sum_{i=1}^{j} \pi(x_i) && \label{Multi_depot_obj_DP}\\
\text{ s.t.~~}\ & \sum_{i=1}^{j} x_{i} \le \delta_j && \label{Multi_depot_1_DP}\\
& x_{i} \in\{0,\dots,c_i\} &&   i \in \{1,\dots,j\}. \label{Multi_depot_2_DP}
\end{align}

\noindent We refer to this formulation as the Dynamic Programming Formulation (\texttt{DPF$_j$}). We therefore obtain the optimality equation
\begin{align} 
V^\star_j(\delta_j) = \begin{cases}
\displaystyle\max_{x_j\in \{0,\dots,\min(c_j,\delta_j)\}} \pi(x_j)=\pi(\min(c_j,\delta_j)) &   j = 1 \\[10pt]
\displaystyle\max_{x_j\in \{0,\dots,\min(c_j,\delta_j)\}} V_j(x_j,\delta_j) &   j > 1, \label{Opt_Eq_UB}
\end{cases}
\end{align}
\noindent in which $V_j(x_j,\delta_j) \coloneqq \pi(x_j) + V^\star_{j-1}(\delta_j-x_j)$. We can employ Equation \eqref{Opt_Eq_UB} to obtain an optimal policy for the \texttt{MDF}. Let us begin by introducing the function $\theta \colon \mathbb{Z}_0 \to \mathbb{Z}_0$ defined by
\begin{align}
\theta(\alpha) \coloneqq \begin{cases}
~\pi(\alpha) &   \alpha \le q\\
(\pi(\alpha)-1)\left\lceil Q \right\rceil+\left\lfloor Q \right\rfloor &   \alpha > q. \label{Def_Theta}
\end{cases}
\end{align}
This function has some interesting properties that allow us to obtain an analytical solution.

\begin{lemma}
	\label{Lemma_dual}
	For each $\alpha \in \mathbb{Z}_0$ we have $\theta(\alpha) \le \alpha$ and $\pi(\theta(\alpha))=\pi(\alpha)$.
\end{lemma}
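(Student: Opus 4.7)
The plan is to mirror the piecewise structure of $\theta$ and split on whether $\alpha \le q$ or $\alpha > q$. The case $\alpha \le q$ is immediate: by definition $\theta(\alpha) = \pi(\alpha)$, and the formula for $\pi$ gives $\pi(\alpha) = 0$ when $\alpha = 0$ and $\pi(\alpha) = 1$ when $1 \le \alpha \le q$. In either subcase $\theta(\alpha) \le \alpha$ is clear, and feeding $0$ or $1$ back into $\pi$ reproduces $\pi(\alpha)$, so I would dispatch this case in one sentence.

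For the substantive case $\alpha > q$, the first step would be to record the algebraic identity $\lfloor Q \rfloor + \lceil Q \rceil = q + 1$ via a short parity check on $q$ (using $Q = (q+1)/2$); this identity is what makes all the subsequent bookkeeping collapse. To prove $\theta(\alpha) \le \alpha$, I would apply the ceiling property to $\pi(\alpha) - 1 = \lceil (\alpha - q)/\lceil Q \rceil \rceil$, giving the strict inequality $\pi(\alpha) - 1 < (\alpha - q)/\lceil Q \rceil + 1$. Plugging this into the definition of $\theta(\alpha)$, the identity telescopes the upper bound to $\alpha + 1$, and integrality of $\theta(\alpha)$ then yields the claim.

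For the equality $\pi(\theta(\alpha)) = \pi(\alpha)$, I would first check that $\theta(\alpha) > q$, so that both $\alpha$ and $\theta(\alpha)$ are evaluated by the same (second) branch of $\pi$; this follows from the fact that $\pi(\alpha) \ge 2$ when $\alpha > q$, which forces $\theta(\alpha) \ge \lceil Q \rceil + \lfloor Q \rfloor = q + 1$. I would then expand $\pi(\theta(\alpha))$ using the second branch formula and pull the integer $\pi(\alpha) - 1$ out of the ceiling, reducing the problem to evaluating $\lceil (\lfloor Q \rfloor - q)/\lceil Q \rceil \rceil$; the identity $\lfloor Q \rfloor + \lceil Q \rceil = q + 1$ sandwiches this fraction in the interval $(-1, 0]$, so the ceiling is $0$ and the equality falls out. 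The main obstacle is really just disciplined bookkeeping with the floors and ceilings; once the identity $\lfloor Q \rfloor + \lceil Q \rceil = q + 1$ is in hand, both claims reduce to a single application of the ceiling property already introduced in Section~\ref{Sec_Notation}.
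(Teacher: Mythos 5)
Your proposal is correct and follows essentially the same route as the paper's proof: the same split on $\alpha\le q$ versus $\alpha>q$, the same use of the ceiling property to get $\theta(\alpha)<\alpha+1$, and the same reduction of $\pi(\theta(\alpha))$ via the identity $\lfloor Q\rfloor+\lceil Q\rceil=q+1$. The only cosmetic difference is that you evaluate $\bigl\lceil(\lfloor Q\rfloor-q)/\lceil Q\rceil\bigr\rceil=0$ directly, while the paper folds the trailing $+1$ into the ceiling to obtain $\lceil 1/\lceil Q\rceil\rceil=1$.
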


\begin{proof}
	We know that $\pi(0)=0$ and $\pi(\alpha)=1$ for $1 \le \alpha\le q$. Therefore, it can be verified that the statement is true for $\alpha \le q$. For $\alpha > q$ we have $\pi(\alpha) = \bcdiv{\alpha-q}{\left\lceil Q \right\rceil}	+1 \ge 2$. Thus,
	\begin{align*}
	\theta(\alpha) &= \bcdiv{\alpha-q}{\left\lceil Q \right\rceil}	\left\lceil Q \right\rceil+\left\lfloor Q \right\rfloor \ge \left\lceil Q \right\rceil+\left\lfloor Q \right\rfloor = q+1.
	\end{align*}
	Furthermore, the ceiling property implies that
	\[
	\bcdiv{\alpha-q}{\left\lceil Q \right\rceil}	<  \frac{\alpha-q}{\left\lceil Q \right\rceil}  + 1,
	\]
	and therefore
	\[
	\theta(\alpha) < \alpha - q + \left\lceil Q \right\rceil+\left\lfloor Q \right\rfloor = \alpha+1.
	\]
	As a result, $q<\theta(\alpha)\le\alpha$. Since $\theta(\alpha)>q$ we can write that \[\pi(\theta(\alpha)) = \bcdiv{\theta(\alpha)-q}{\left\lceil Q \right\rceil}	+1.\]Now by considering the definition \eqref{Def_Theta} we obtain
	\begin{align*}
	\pi(\theta(\alpha)) &= (\pi(\alpha)-1) + \bcdiv{\left\lfloor Q \right\rfloor-q}{\left\lceil Q \right\rceil} + 1 \\
	& = (\pi(\alpha)-1) + \bcdiv{\left\lfloor Q \right\rfloor+\left\lceil Q \right\rceil-q}{\left\lceil Q \right\rceil}\\
	& = (\pi(\alpha)-1) + \bcdiv{1}{\left\lceil Q \right\rceil} =  \pi(\alpha).\qedhere
	\end{align*}
\end{proof}

\begin{lemma}\label{lemma_profit_function_revised}
	For each $\alpha,\beta \in \mathbb{Z}_+$ with $q< \alpha <  \beta-q$ we have
	\begin{align*}
	\medskip \pi(\alpha)+\pi(\beta-\alpha) & \le 1+\pi\left(\beta-\bfloor{Q}\right)\,.
	\end{align*}
\end{lemma}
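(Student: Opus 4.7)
The plan is to compute both sides directly using the second case of $\pi$, which is legitimate because $\alpha>q$ and $\beta-\alpha>q$ by hypothesis, and $\beta-\lfloor Q\rfloor>q$ as well (since $\beta>2q$ and $\lfloor Q\rfloor\le q$), and then reduce the claim to a purely arithmetic inequality on ceilings.

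First I would introduce the shifts $u:=\alpha-q$ and $v:=\beta-\alpha-q$, both positive integers under the hypothesis $q<\alpha<\beta-q$. Then
\[
\pi(\alpha)+\pi(\beta-\alpha)=\bcdiv{u}{\lceil Q\rceil}+\bcdiv{v}{\lceil Q\rceil}+2.
\]
Using the identity $\lceil Q\rceil+\lfloor Q\rfloor=q+1$, I rewrite $\beta-\lfloor Q\rfloor-q=u+v+\lceil Q\rceil-1$, so
\[
1+\pi(\beta-\lfloor Q\rfloor)=\bcdiv{u+v-1}{\lceil Q\rceil}+3.
\]
Setting $L:=\lceil Q\rceil$, the lemma is therefore equivalent to the following statement about ceilings:
\[
\bcdiv{u}{L}+\bcdiv{v}{L}\le\bcdiv{u+v-1}{L}+1\qquad\text{for integers }u,v\ge 1,\ L\ge 1.
\]

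The main subtlety, and the place where I expect the proof to require genuine care, is that the textbook subadditivity bound $\lceil x\rceil+\lceil y\rceil\le\lceil x+y\rceil+1$ applied naively produces $\lceil(u+v)/L\rceil+1$ on the right, which is strictly weaker than what we need (as is easy to check with $q=3$, $\alpha=4$, $\beta=9$, where the bound must be tight). The $-1$ appearing inside the ceiling on the right-hand side comes from the gap $q-\lfloor Q\rfloor=\lceil Q\rceil-1$ and must be exploited.

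To get this tighter bound I would use the identity $\lceil u/L\rceil=\lfloor(u-1)/L\rfloor+1$, valid for every integer $u\ge 1$. Applying it twice converts the left-hand side into $\lfloor(u-1)/L\rfloor+\lfloor(v-1)/L\rfloor+2$. Now the ordinary floor-subadditivity $\lfloor a\rfloor+\lfloor b\rfloor\le\lfloor a+b\rfloor$ yields
\[
\bfloor{\frac{u-1}{L}}+\bfloor{\frac{v-1}{L}}\le\bfloor{\frac{u+v-2}{L}},
\]
and the companion identity $\lfloor(n-1)/L\rfloor=\lceil n/L\rceil-1$ with $n=u+v-1$ brings us exactly to $\lceil(u+v-1)/L\rceil+1$, which is what we need. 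Unwinding the substitutions completes the proof. If this conversion feels delicate, a short case analysis on the residues of $u$ and $v$ modulo $L$ would give the same inequality directly and could serve as a safer backup argument.
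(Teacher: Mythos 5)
Your proof is correct, but it takes a genuinely different route from the paper's. The paper derives this lemma as a quick consequence of Lemma~\ref{Lemma_dual}: since $\theta(\alpha)\le\alpha$ and $\pi(\theta(\alpha))=\pi(\alpha)$, it bounds $\pi(\beta-\alpha)\le\pi(\beta-\theta(\alpha))$ by monotonicity, substitutes the explicit form $\theta(\alpha)=(\pi(\alpha)-1)\lceil Q\rceil+\lfloor Q\rfloor$, and finds that $\pi(\alpha)+\pi(\beta-\theta(\alpha))$ collapses \emph{exactly} to $1+\pi(\beta-\lfloor Q\rfloor)$ --- so the only inequality in the paper's argument is the monotone replacement of $\alpha$ by $\theta(\alpha)$. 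You instead bypass $\theta$ entirely, shift to $u=\alpha-q$, $v=\beta-\alpha-q$, and reduce the claim to
\[
\bcdiv{u}{L}+\bcdiv{v}{L}\le\bcdiv{u+v-1}{L}+1,\qquad L=\lceil Q\rceil,
\]
which you settle with the conversion $\lceil n/L\rceil=\lfloor (n-1)/L\rfloor+1$ (the same identity the paper cites from Graham et al.\ in the proof of Proposition~\ref{Proposition_Pi}) plus superadditivity of the floor. All steps check out, including the boundary verifications that $\beta-\alpha>q$ and $\beta-\lfloor Q\rfloor>q$ so that the second branch of $\pi$ applies throughout, and your observation that naive ceiling subadditivity loses exactly the needed unit is accurate. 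What each approach buys: yours is self-contained and elementary, and makes the tightness of the bound transparent at the level of residues; the paper's is shorter given its earlier investment in $\theta$, and that investment pays off elsewhere, since $\theta$ is precisely the object in terms of which the optimal policy of Theorem~\ref{Theorem_discrete} is expressed. Either proof would serve.
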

\begin{proof}
	From Lemma~\ref{Lemma_dual}, we know that $\theta(\alpha)\le \alpha$. As $\beta-\alpha>q$, we obtain $\beta-\theta(\alpha) \ge \beta-\alpha >q.$
	Now by considering that $\alpha>q$ we can write
	\begin{align*}
	\medskip \pi(\alpha)+\pi(\beta-\alpha) & \le \pi(\alpha)+\pi(\beta-\theta(\alpha)) \\
	\medskip  & = \pi(\alpha)+\pi\left(\beta-(\pi(\alpha)-1)\bceil{Q}-\bfloor{Q}\right) \\
	\medskip  & = \pi(\alpha) - (\pi(\alpha)-1) + \bcdiv{\beta-\floor{Q}-q}{\ceil{Q}} + 1 \\
	\medskip  & = 1 + \pi\left(\beta-\bfloor{Q}\right)\,. 	
	\end{align*}
	Note that $\beta >\alpha + q > 2q$ which follows that $\beta-\bfloor{Q} > q$. This enabled us to write the last equality.
\end{proof}

\begin{lemma}
	\label{lemma_basic_property}
	For $\alpha \in\mathbb{Z}_0$, we have $\pi(\beta+1)-\pi(\beta) \le 1$.
\end{lemma}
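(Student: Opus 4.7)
The statement quantifies $\alpha$ but uses $\beta$; I read it as $\beta \in \mathbb{Z}_0$. The plan is a straightforward case analysis driven by the piecewise definition of $\pi$ in Proposition~\ref{Proposition_Pi}, splitting on where $\beta$ and $\beta+1$ lie relative to the threshold $q$.

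First I would dispose of the cases that fall entirely in the top branch, i.e.\ $\beta+1 \le q$. Here $\pi(\beta)=\lceil \beta/q\rceil$ and $\pi(\beta+1)=\lceil (\beta+1)/q\rceil$, both valued in $\{0,1\}$ on this range, so the increment is either $0$ or $1$. Second I would handle the boundary case $\beta=q$ by direct substitution: $\pi(q)=1$ and, since $\lceil Q\rceil\ge 1$,
\[
\pi(q+1)=\bcdiv{1}{\lceil Q\rceil}+1 = 1+1 = 2,
\]
so the difference is exactly $1$.

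For the main case $\beta>q$ both terms use the bottom branch, and the difference reduces to
\[
\bcdiv{\beta+1-q}{\lceil Q\rceil} - \bcdiv{\beta-q}{\lceil Q\rceil}.
\]
I would then invoke the elementary fact that $\lceil y+s\rceil \le \lceil y\rceil + 1$ whenever $s\in(0,1]$, applied with $s = 1/\lceil Q\rceil \le 1$ (since $\lceil Q\rceil\ge 1$). This immediately bounds the difference above by $1$.

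There is no real obstacle; the only thing to be careful about is that the boundary case $\beta=q$ is not accidentally forced into the wrong branch, which is why I treat it separately. Combining the three cases gives the claim.
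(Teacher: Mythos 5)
Your proof is correct and is essentially the paper's argument written out in full: the paper's own proof is the single sentence ``The result follows from the ceiling property,'' and your three-case analysis (both terms in the top branch, the boundary $\beta=q$, and both terms in the bottom branch where $\bcdiv{\beta+1-q}{\lceil Q\rceil}-\bcdiv{\beta-q}{\lceil Q\rceil}\le 1$) is exactly the elaboration that sentence leaves implicit. Your observation that the statement's quantifier should read $\beta\in\mathbb{Z}_0$ rather than $\alpha$ is also correct; this is a typo in the paper.
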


\begin{proof}
	The result follows from the ceiling property.
\end{proof}

\begin{lemma}
	\label{lemma_profit_function}
	For each $\alpha\in\mathbb{Z}_0$ and $\beta \in \mathbb{Z}_+$ with $\alpha \le \beta$ we have \[\pi(\alpha)+\pi(\beta-\alpha) \le 1 + \pi(\beta-1).\]
\end{lemma}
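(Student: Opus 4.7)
The plan is a case analysis depending on the position of $\alpha$ and $\beta-\alpha$ relative to the threshold $q$, leaning on the earlier lemmas and on the (easily verified) monotonicity of $\pi$. From the piecewise formula in Proposition~\ref{Proposition_Pi} one sees immediately that $\pi$ is non-decreasing on $\mathbb{Z}_0$: it takes the value $0$ at $0$, the value $1$ on $\{1,\ldots,q\}$, and on $\{q+1,q+2,\ldots\}$ the expression $\lceil(\alpha-q)/\lceil Q\rceil\rceil+1$ is obviously non-decreasing in $\alpha$. I would record this as a one-line observation at the top of the proof and use it freely. I would also exploit the symmetry of the left-hand side $\pi(\alpha)+\pi(\beta-\alpha)$ in $\alpha$ and $\beta-\alpha$ to halve the work.

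First I would dispatch the boundary cases $\alpha=0$ and $\alpha=\beta$. Both reduce to $\pi(\beta)\le 1+\pi(\beta-1)$, which is exactly Lemma~\ref{lemma_basic_property}. Next I would handle the \emph{small} case $1\le \alpha\le q$, where $\pi(\alpha)=1$ and the desired inequality becomes $\pi(\beta-\alpha)\le \pi(\beta-1)$; this follows from $\beta-\alpha\le \beta-1$ together with monotonicity. The symmetric situation $1\le \beta-\alpha\le q$ is identical.

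The substantive case is $\alpha>q$ and $\beta-\alpha>q$. The latter rewrites as $\alpha<\beta-q$, so the hypothesis $q<\alpha<\beta-q$ of Lemma~\ref{lemma_profit_function_revised} is automatically met; that lemma then yields $\pi(\alpha)+\pi(\beta-\alpha)\le 1+\pi(\beta-\lfloor Q\rfloor)$. To close, I would note that $\lfloor Q\rfloor=\lfloor(q+1)/2\rfloor\ge 1$ (since $q\in\mathbb{Z}_+$), hence $\beta-\lfloor Q\rfloor\le \beta-1$, and one more use of monotonicity gives $\pi(\beta-\lfloor Q\rfloor)\le \pi(\beta-1)$. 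Once Lemma~\ref{lemma_profit_function_revised} is in hand there is no real obstacle; the only point demanding attention is ensuring each case triggers the right tool, and the hinge in the substantive case is the seemingly trivial fact that $\lfloor Q\rfloor\ge 1$ rather than $0$.
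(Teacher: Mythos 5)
Your proposal is correct and follows essentially the same route as the paper: boundary cases via Lemma~\ref{lemma_basic_property}, the small-$\alpha$ (or small-$(\beta-\alpha)$) case via monotonicity of $\pi$, and the substantive case $q<\alpha<\beta-q$ via Lemma~\ref{lemma_profit_function_revised} combined with $\lfloor Q\rfloor\ge 1$. The only difference is cosmetic — you invoke symmetry to organize the cases where the paper splits on $\beta-\alpha\le q$ versus $\beta-\alpha>q$ inside the case $q<\alpha\le\beta-1$.
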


\begin{proof}
	From Lemma~\ref{lemma_basic_property} we know that $\pi(\beta)-\pi(\beta-1) \le 1$. Thus, the result follows for $\alpha=0$. Now consider the case $1 \le \alpha\le q$ in which $\pi(\alpha)=1$. Since $\pi(\cdot)$ is an increasing function and $\beta-\alpha \le \beta-1$, the result follows. Finally, we suppose that $q<\alpha\le\beta$. Since the case $\alpha = \beta$ is equivalent to the case $\alpha=0$, we assume that $q < \alpha \le \beta-1$.\\
	If $\beta-\alpha \le q$, then we obtain
	\begin{align*}
	\pi(\alpha)+\pi(\beta-\alpha) = \pi(\alpha)+1 \le \pi(\beta-1) +1.
	\end{align*}
	For $\beta-\alpha > q$, we can utilize Lemma~\ref{lemma_profit_function_revised} to obtain
	\begin{align*}
	\pi(\alpha)+\pi(\beta-\alpha) & \le 1+\pi\left(\beta-\bfloor{Q}\right) \le 1 + \pi(\beta-1). \qedhere
	\end{align*}
\end{proof}

\begin{lemma}
	\label{Lemma_pi_x_decreasing}
	For $\alpha \in\mathbb{Z}_0$, we have $\pi(\alpha) \le \alpha$.
\end{lemma}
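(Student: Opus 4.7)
The plan is to case-split on the piecewise definition of $\pi$ and dispose of each case by direct computation, leaning on the ceiling property introduced in Section~\ref{Sec_Notation}. No induction or appeal to the preceding lemmas seems necessary; the bound $\pi(\alpha) \le \alpha$ is quite loose, so a direct estimate should suffice.

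First I would handle the two cases in which $\pi$ equals a small constant. For $\alpha = 0$, the definition gives $\pi(0) = \lceil 0/q \rceil = 0$, so the inequality is immediate. For $1 \le \alpha \le q$, the definition gives $\pi(\alpha) = \lceil \alpha/q \rceil = 1$, which is at most $\alpha$ since $\alpha \ge 1$.

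For the remaining case $\alpha > q$, I would use two simple ingredients: first, $\lceil Q \rceil \ge 1$ (since $Q = (q+1)/2 \ge 1$ whenever $q \ge 1$), and second, $\alpha - q$ is a non-negative integer. Monotonicity of the ceiling function then yields
\[
\bcdiv{\alpha - q}{\lceil Q \rceil} \;\le\; \lceil \alpha - q \rceil \;=\; \alpha - q,
\]
so the second branch of the definition of $\pi$ gives $\pi(\alpha) \le (\alpha - q) + 1 \le \alpha$, where the final inequality uses $q \ge 1$.

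I do not foresee a real obstacle. The only subtlety worth flagging is to remember that $\alpha - q$ is itself an integer, so that the outer ceiling introduces no slack beyond $\alpha - q$ and the bound does not degrade. It is also worth noting (though not needed for the statement) that the inequality is tight when $q = 1$, in which case $\lceil Q \rceil = 1$ and $\pi(\alpha) = \alpha$ for all $\alpha > 1$; this is a useful sanity check that the argument is not giving away too much.
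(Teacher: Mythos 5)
Your proof is correct, but it takes a genuinely different route from the paper's. You argue directly from the closed form of $\pi$: the cases $\alpha \le q$ are immediate, and for $\alpha > q$ you bound $\bcdiv{\alpha-q}{\lceil Q\rceil} \le \alpha - q$ using $\lceil Q\rceil \ge 1$ and the fact that $\alpha - q$ is a non-negative integer, giving $\pi(\alpha) \le \alpha - q + 1 \le \alpha$. The paper instead derives the lemma as a quick corollary of Lemma~\ref{lemma_basic_property}: since $\pi(\alpha+1)-\pi(\alpha) \le 1$, the function $f(\alpha) \coloneqq \pi(\alpha)-\alpha$ is non-increasing, and since $f(0)=0$ the bound follows for all $\alpha \in \mathbb{Z}_0$. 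Your argument is self-contained and makes the slack explicit (roughly a factor of $\lceil Q\rceil$ in the second branch, plus the $q-1$ you give away at the end), and your $q=1$ tightness check is a nice sanity test; the paper's version is shorter because it reuses machinery already established for the dynamic-programming argument and avoids touching the piecewise definition again. Both are complete proofs; neither has a gap.
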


\begin{proof}
	Given that $\alpha \ge 0$, we just need to show that the function $f(\alpha) \coloneqq \pi(\alpha)-\alpha$ is decreasing. We have \[
	f(\alpha+1)-f(\alpha) = \pi(\alpha+1)-\pi(\alpha) - 1 \le 0,
	\]
	which is true due to Lemma~\ref{lemma_basic_property}. 
\end{proof}

\begin{lemma}
	\label{Lemma_trivial_solution}
	If $\delta_j \le j$, then
	\[ x^\star_i \coloneqq \begin{cases}
	1, &   i = 1, \dots, \delta_j \\
	0, &   i = \delta_j + 1, \dots, j
	\end{cases} \]
	is an optimal solution for \texttt{DPF$_j$}. Furthermore, $V^\star_j(\delta_j)=\delta_j$.
\end{lemma}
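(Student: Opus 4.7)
The plan is to establish the lemma in two short steps: a lower bound on $V_j^\star(\delta_j)$ by exhibiting the proposed $x^\star$ as a feasible solution achieving objective $\delta_j$, and a matching upper bound by invoking Lemma~\ref{Lemma_pi_x_decreasing}.

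For the lower bound I first verify feasibility of $x^\star$. Since each $c_i\in\mathbb{Z}_+$ we have $c_i\ge 1$, so $x_i^\star\in\{0,1\}\subseteq\{0,\dots,c_i\}$, and the hypothesis $\delta_j\le j$ together with the definition of $x^\star$ gives $\sum_{i=1}^j x_i^\star = \delta_j$, satisfying Constraint~\eqref{Multi_depot_1_DP}. Then I compute the objective: using $\pi(0)=0$ and $\pi(1)=1$ (which follow directly from the first branch of the definition of $\pi$), the total is $\sum_{i=1}^j \pi(x_i^\star)=\delta_j\cdot\pi(1)+(j-\delta_j)\cdot\pi(0)=\delta_j$. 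Hence $V_j^\star(\delta_j)\ge \delta_j$.

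For the matching upper bound I apply Lemma~\ref{Lemma_pi_x_decreasing} termwise: for any feasible $(x_1,\dots,x_j)$,
\[
\sum_{i=1}^{j}\pi(x_i)\;\le\;\sum_{i=1}^{j}x_i\;\le\;\delta_j,
\]
where the last inequality is Constraint~\eqref{Multi_depot_1_DP}. This shows $V_j^\star(\delta_j)\le\delta_j$, and combining with the lower bound yields equality as well as optimality of $x^\star$.

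There is essentially no obstacle here; the whole argument reduces to recognising that when there is at most one unit of demand per depot on average, distributing single units is best because the per-unit ``profit'' $\pi(x)/x$ is bounded by $1$ (which is exactly Lemma~\ref{Lemma_pi_x_decreasing}). The only thing to be careful about is the implicit use of $c_i\ge 1$ to guarantee that $x_i^\star=1$ lies in the admissible range, which follows from $c_i\in\mathbb{Z}_+$ as specified in Section~\ref{Sec_Notation}.
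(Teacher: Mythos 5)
Your proposal is correct and follows exactly the paper's argument: exhibit $x^\star$ as a feasible solution with objective value $\delta_j$, then obtain the matching upper bound $\sum_{i}\pi(x_i)\le\sum_i x_i\le\delta_j$ from Lemma~\ref{Lemma_pi_x_decreasing} and Constraint~\eqref{Multi_depot_1_DP}. You merely spell out the feasibility and objective computations that the paper declares ``clear.''
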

\begin{proof}
	It is clear that the given solution is feasible for \texttt{DPF$_j$} and $\sum_{i=1}^{j} \pi(x^\star_i) = \delta_j$. The proof is complete if we show that $\delta_j$ is an upper bound for $V^\star_j(\delta_j)$. According to Lemma~\ref{Lemma_pi_x_decreasing},\[
	\sum_{i=1}^{j} \pi(x_i) \le \sum_{i=1}^{j} x_i \le \delta_j,
	\]which states that $\delta_j$ is an upper bound for $V^\star_j(\delta_j)$.
\end{proof}

\begin{lemma}
	\label{Lemma_x_i_1}
	Consider an instance of \texttt{DPF$_j$} with $\delta_j \ge j$. There exists an optimal solution for this instance in which $x_i \ge 1$ for $i=1,\dots,j$.
\end{lemma}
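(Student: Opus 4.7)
The plan is to start from an arbitrary optimal solution $(x_1^\star,\ldots,x_j^\star)$ of \texttt{DPF$_j$} and run an exchange argument: whenever some coordinate vanishes, I would produce another optimal solution with strictly fewer zero coordinates. Since this operation strictly decreases the number of zeros each time, iterating it at most $j$ times yields an optimal solution that is strictly positive in every coordinate.

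The first step would be to observe that in any optimal solution containing a zero entry $x_a^\star = 0$ the aggregate constraint $\sum_i x_i \le \delta_j$ must already be tight. For, if $\sum_i x_i^\star < \delta_j$, then since $c_a \in \mathbb{Z}_+$ implies $c_a \ge 1$ we could lift $x_a$ from $0$ to $1$ without violating feasibility, gaining $\pi(1)-\pi(0)=1$ in the objective, contradicting optimality. Combined with the hypothesis $\delta_j \ge j$, tightness forces the remaining $j-1$ coordinates to sum to at least $j$, so pigeonhole guarantees some index $b \ne a$ with $x_b^\star \ge 2$ whenever $j \ge 2$. The degenerate case $j=1$ is handled directly by the optimality equation \eqref{Opt_Eq_UB}, which gives $x_1^\star = \min(c_1,\delta_1) \ge 1$.

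Then I would perform the exchange: replace $(x_a^\star, x_b^\star) = (0, x_b^\star)$ by $(1, x_b^\star-1)$ and leave the remaining coordinates untouched. The swapped solution is still feasible because the total is unchanged, $1 \le c_a$, and $0 < x_b^\star - 1 < x_b^\star \le c_b$. The change in the objective equals
\[
\bigl[\pi(1)+\pi(x_b^\star-1)\bigr]-\bigl[\pi(0)+\pi(x_b^\star)\bigr] \;=\; 1-\bigl(\pi(x_b^\star)-\pi(x_b^\star-1)\bigr) \;\ge\; 0,
\]
where the last inequality is Lemma~\ref{lemma_basic_property} applied with $\beta = x_b^\star - 1 \ge 1$. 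Hence the swapped solution is also optimal and has exactly one fewer zero coordinate, completing one step of the iteration.

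The argument is essentially mechanical; the only mildly subtle point, and the one I would be most careful to verify, is the existence of a donor coordinate with $x_b^\star \ge 2$, which genuinely relies on the combination of tightness of the aggregate constraint at optimality with the hypothesis $\delta_j \ge j$. Everything else reduces to a single appeal to Lemma~\ref{lemma_basic_property} together with routine feasibility bookkeeping.
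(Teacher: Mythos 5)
Your proof is correct and follows essentially the same exchange argument as the paper: locate a zero coordinate, find a donor with value at least $2$, shift one unit across, and invoke Lemma~\ref{lemma_basic_property} to see the objective does not decrease. In fact you supply more detail than the paper does at the one non-obvious point --- the paper simply asserts that a donor coordinate with $x_k^\star \ge 2$ must exist, whereas you justify it via tightness of the aggregate constraint at optimality plus pigeonhole.
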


\begin{proof}
	Let $(x^\star_1,\dots,x_j^\star)$ be an optimal solution of \texttt{DPF$_j$}. If $x^\star_i \ge 1$ for $i = 1,\dots,j$ then the proof is complete. Therefore, suppose that $x^\star_l=0$ for some $l\in\{1,\dots,j\}$. Since the solution is optimal and $\delta_j \ge j$, there must exist some $k\in\{1,\dots,j\}$ with $x^\star_k \ge 2$. Now we construct a new feasible solution by setting $x_l^{new}=1$ and $x_k^{new}=x^\star_k-1$. According to Lemma~\ref{lemma_basic_property},\[
	\pi(x^\star_l) + \pi(x^\star_k) \le \pi(x^{new}_l) + \pi(x^{new}_k),
	\] 
	implying that the new feasible solution is also optimal. We can repeat this procedure until we have an optimal solution in which $x_i \ge 1$ for $i=1,\dots,j$.
\end{proof}

\begin{theorem}
	\label{Theorem_discrete}
	For $\delta_j \ge j$, the optimal value of $\texttt{DPF}_j$ is given by\[
	V_j^\star(\delta_j) = j-1+\pi(\min(\lambda_{\ell_j},c_{\ell_j})) + \sum_{i=1}^{\ell_j-1} (\pi(c_i)-1),\]
	where $\lambda_{\ell_j} \coloneqq (\delta_j-j+1)-\sum_{r=1}^{\ell_j-1}(\theta(c_r)-1)\,$,
	and $\ell_j \in \{1,\dots,j\}$ is the largest integer such that $\lambda_{\ell_j} \ge 1$\,. 
\end{theorem}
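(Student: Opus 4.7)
My plan is to establish the theorem in two steps: I would first exhibit an explicit feasible solution that attains the claimed right-hand side, then prove the matching upper bound by induction on $j$ driven by the Bellman recursion~\eqref{Opt_Eq_UB}.

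For the achievability direction, I would propose the candidate solution
\begin{align*}
x_i^\star = \theta(c_i) \text{ for } i \in \{1,\dots,\ell_j-1\},\quad x_{\ell_j}^\star = \min(\lambda_{\ell_j}, c_{\ell_j}),\quad x_i^\star = 1 \text{ for } i \in \{\ell_j+1,\dots,j\}.
\end{align*}
Lemma~\ref{Lemma_dual} gives $\theta(c_i) \le c_i$ and $\pi(\theta(c_i)) = \pi(c_i)$, so the capacity constraints are satisfied; the load bound $\sum_i x_i^\star \le \delta_j$ follows directly from the definition of $\lambda_{\ell_j}$ (and when $\ell_j < j$, the maximality of $\ell_j$ forces $\lambda_{\ell_j} < \theta(c_{\ell_j}) \le c_{\ell_j}$, making the inequality an equality). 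Evaluating $\sum_i \pi(x_i^\star)$ with $\pi(\theta(c_i)) = \pi(c_i)$ and $\pi(1) = 1$ then reproduces exactly the claimed formula.

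For the upper bound, I would induct on $j$. The base case $j = 1$ matches~\eqref{Opt_Eq_UB} since $\ell_1 = 1$ and $\lambda_1 = \delta_1$. In the inductive step, Lemma~\ref{Lemma_x_i_1} lets me assume $x_j \ge 1$, so the recursion reduces to
\begin{align*}
V_j^\star(\delta_j) = \max_{x_j \in \{1,\dots,\min(c_j,\,\delta_j-j+1)\}} \bigl[\pi(x_j) + V_{j-1}^\star(\delta_j - x_j)\bigr],
\end{align*}
with the IH applicable to each inner term. The key step is to use the step-function structure of $\pi$, via Lemmas~\ref{lemma_basic_property} and~\ref{Lemma_dual}, to reduce the maximization to two ``extreme'' candidates: $x_j = 1$ (saving demand for the larger-capacity earlier depots) and $x_j = \theta(c_j)$ when feasible (exploiting depot $j$ at its efficient maximum, giving $\pi(c_j)$ vehicles there). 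Plugging each into the IH and comparing should match the formula, with the two cases corresponding respectively to $\ell_j < j$ and $\ell_j = j$.

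The hard part will be handling the dependence of the IH's parameters $(\ell_{j-1}, \lambda'_{\ell_{j-1}})$ on $x_j$: because $\ell_{j-1}$ is defined through cumulative sums of $\theta(c_r) - 1$, it shifts piecewise as $x_j$ varies, and reading off the correct $\ell_j$ from the resulting expression requires careful algebraic bookkeeping with the $\lambda$ recursion. The capacity ordering $c_1 \ge \cdots \ge c_j$ is what makes the reduction to the two extreme candidates valid in the first place, since it means concentrating load on the highest-capacity depots is always at least as good; the concentration lemmas (especially Lemma~\ref{lemma_profit_function_revised}) will be needed to rigorously rule out intermediate values of $x_j$.
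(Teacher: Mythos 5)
Your achievability half is fine and essentially matches the paper's: the candidate policy, its feasibility via Lemma~\ref{Lemma_dual}, and the evaluation of its objective are all correct (the paper writes $x_{\ell_j}^\star=\theta(\min(\lambda_{\ell_j},c_{\ell_j}))$ rather than $\min(\lambda_{\ell_j},c_{\ell_j})$, but since $\pi\circ\theta=\pi$ both give the same value). The gap is in the upper-bound half, and it sits exactly where you flag it. The claim that $\max_{x_j}\bigl[\pi(x_j)+V_{j-1}^\star(\delta_j-x_j)\bigr]$ is attained at $x_j=1$ or $x_j=\theta(c_j)$ is the entire content of the inductive step, and none of the lemmas you cite delivers it: Lemmas~\ref{Lemma_dual}, \ref{lemma_profit_function_revised} and \ref{lemma_basic_property} compare $\pi(\alpha)+\pi(\beta-\alpha)$ for a \emph{single pair} of depots, whereas your reduction needs to control $\pi(x_j)+V_{j-1}^\star(\delta_j-x_j)$, i.e., the marginal structure of the full value function $V_{j-1}^\star$ in its demand argument --- a staircase whose steps depend on all of $c_1,\dots,c_{j-1}$. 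Establishing that structure is precisely the ``careful algebraic bookkeeping'' you defer, so the proof is not complete. Two further wrinkles: when $\ell_j=j$ and $1\le\lambda_{\ell_j}<c_{\ell_j}$ the optimizing load at depot $j$ is $\theta(\lambda_{\ell_j})$, not $\theta(c_j)$, so even verifying that one of your two candidates attains the claimed value is not a direct plug-in but needs the $\theta$-shift computation of Lemma~\ref{lemma_profit_function_revised}; and for large $x_j$ the residual demand $\delta_j-x_j$ can fall below $j-1$, where the induction hypothesis in the stated form no longer applies and Lemma~\ref{Lemma_trivial_solution} must take over.

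The paper circumvents all of this with a rearrangement argument worth noting. It first proves the $j=2$ case in full (three sub-cases on $x_2$; this is where Lemma~\ref{lemma_profit_function_revised} actually does its work). In the inductive step it takes an \emph{arbitrary} $x_{k+1}$, concatenates it with the optimal policy for \texttt{DPF$_k$} on the residual demand, permutes $x_{k+1}$ into slot $\ell_k+1$ (feasible because $c_{k+1}\le c_{\ell_k+1}$), and then applies the already-proved two-depot result to the single pair of coordinates $(\ell_k,\ell_k+1)$, recovering exactly the policy \eqref{OptSol} for \texttt{DPF$_{k+1}$}. This reduces the general step to the two-depot case and never requires analyzing $V_{j-1}^\star$ as a function of $\delta$. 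If you want to salvage your route, you would need to prove the missing concavity/increment lemmas for $V_{j-1}^\star$ itself; adopting the paper's exchange step is the shorter path.
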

\begin{proof}
	It is enough to show that the policy 
	\begin{align}
	x_i^\star & \coloneqq \begin{cases} 
	\theta(c_i)&  i=1,\dots,{\ell_j}-1 \\
	\theta(\min(\lambda_{\ell_j}\,,c_{\ell_j})) &   i={\ell_j} \\
	1 &   i={\ell_j}+1,\dots,j,
	\end{cases}\label{OptSol}
	\end{align}
	is optimal since it is a feasible policy that yields $V_j^\star(\delta_j)$. From Lemma~\ref{Lemma_dual} we know that $\theta(c_i) \le c_i$ for $i=1,\dots,j$, implying that the given policy \eqref{OptSol} is feasible. We give a proof by induction on~$j$ to show that it is also an optimal policy. For $j=1$ the given policy states that $\ell_1 = 1$  and $x^\star_1 = \theta(\min(\delta_1,c_1))$. In addition, the optimality equation \eqref{Opt_Eq_UB} states that $V^\star_1(\delta_1)=\pi(\min(\delta_1,c_1))$. From Lemma~\ref{Lemma_dual} we know that $\pi(\min(\delta_1,c_1)) = \pi (\theta(\min(\delta_1,c_1)))$. Therefore, $x^\star_1 = \theta(\min(\delta_1,c_1))$ is an optimal policy for \texttt{DPF$_1$}, implying that the assertion is true for $j=1$. Next we show that the assertion holds for $j=2$, that is, the given policy \eqref{OptSol} is optimal for $\texttt{DPF}_2$. We will refer to this case later. We know that
	\begin{align*}
	\medskip V_2^\star(\delta_2) & = \max_{x_2 \in \{0,\dots,\min(\delta_2,c_2) \} }V_2(x_2,\delta_2).
	\end{align*}
	Here, we consider two possible cases for $\delta_2$ and show that in both of them the given policy \eqref{OptSol} is optimal.
	
	\noindent\textbf{Case 1: $\bm{\delta_2 \le \theta(c_1)}$\,:} In this case, Equation \eqref{OptSol} prescribes that $\ell_2 = 1$, $x_2^\star=1$ and $x_1^\star=\theta(\min(\delta_2-1,c_1))$. Given that $\delta_2 \le \theta(c_1)$, Lemma~\ref{Lemma_dual} implies that $\delta_2 \le c_1$ and therefore $x_1^\star=\theta(\delta_2-1)$. Now, by utilizing Lemma~\ref{lemma_profit_function} and Lemma~\ref{Lemma_dual}, we have
	\begin{align*}
	\medskip V_2(x_2,\delta_2) & =\pi(x_2)+\pi(\min (c_1,\delta_2-x_2)) \\
	\medskip  & = \pi(x_2)+\pi(\delta_2-x_2) \\
	\medskip  & \le 1+\pi(\delta_2-1) = 1+\pi(\theta(\delta_2-1))\,,
	\end{align*} 
	for any $x_2\in\{0,\dots,\min(\delta_2,c_2)\}$\,. Thus, $(x_1^\star,x_2^\star)$ is an optimal policy for $\texttt{DPF}_2$. 
	
	\noindent\textbf{Case 2: $\bm{\delta_2 > \theta(c_1)}$\,:} In this case, Equation \eqref{OptSol} prescribes that $\ell_2 = 2$, $x_1^\star=\theta(c_1)$ and $x_2^\star=\theta(\min(\delta_2-\theta(c_1),c_2))$\,. There are three possible categories to choose $x_2$\,:
	
	\textbf{(i) Choose $\bm{x_2 \le q}$:} We have
	\begin{align*}
	\medskip V_2(x_2,\delta_2) & = \pi(x_2)+\pi(\min(\delta_2-x_2,c_1)) \\ 
	\medskip  & \le \pi(q)+\pi(c_1) \\
	\medskip  & =1+\pi(x_1^\star) \le \pi(x_2^\star) + \pi(x_1^\star).
	\end{align*} 
	
	Hence, there is no policy better than $(x_1^\star,x_2^\star)$ when $x_2 \le q$\,.
	
	\textbf{(ii) Choose $\bm{q< x_2 <\delta_2 - q }$:} Since $x_2 \le c_2 \le c_1$, the choice of $x_2 > q$ can be feasible if $c_2 > q$ and $c_1 > q$. By utilizing Lemma \ref{lemma_profit_function_revised}, we have
	\begin{align*}
	\medskip V_2(x_2,\delta_2) & = \pi(x_2)+\pi(\min(\delta_2-x_2,c_1)) \\
	\medskip  & \le \pi(x_2)+\pi(\delta_2-x_2) \\
	\medskip  & \le 1+\pi\left(\delta_2-\bfloor{Q}\right).
	\end{align*}
	
	Now it is enough to show that the right hand side of the last inequality is less than or equal to $\pi(x_1^\star)+\pi(x_2^\star)$. For this purpose, we first consider the case $\delta_2 - \theta(c_1) \le q$.	In this case, we obtain $x_2^\star=\theta(\delta_2-\theta(c_1))$ with $\pi(x_2^\star) = 1$ because $c_2 > q$ and $\delta_2 > \theta(c_1)$. Therefore, we have $\pi(x_1^\star)+\pi(x_2^\star) = 1 + \pi(c_1)$.
	
	Given that $\delta_2 - \theta(c_1) \le q$, we conclude that
	\begin{align*}
	\pi\left(\delta_2-\bfloor{Q}\right) \le \pi \left( \theta(c_1) + q - \bfloor{Q} \right).
	\end{align*}
	
	So, it is enough to show that the right hand side of the last inequality is equal to $\pi(c_1)$. First note that $c_1 >q$ which implies that $\theta(c_1) > q$ (see the proof of Lemma~\ref{Lemma_dual}). On the other hand, $q - \floor{Q} \ge 0$ for any $q \in \mathbb{Z}_+$. Therefore we have
	\begin{align*}
	\pi \left( \theta(c_1) + q - \bfloor{Q} \right) &= \pi \left( (\pi(c_1) -1) \bceil{Q} + q \right) \\
	& = (\pi(c_1) -1) + 1 = \pi(c_1).
	\end{align*}
	
	Next we consider the case $\delta_2 - \theta(c_1) > q$, in which we have
	\begin{align*}
	\pi(x_1^\star)+\pi(x_2^\star) = \pi(c_1)+\pi(\min(\delta_2-\theta(c_1),c_2)).
	\end{align*}
	If $\delta_2-\theta(c_1) > c_2$ then $\pi(x_1^\star)+\pi(x_2^\star) = \pi(c_1)+\pi(c_2)$ which is clearly an upper bound for $V_2(x_2,\delta_2)$. This implies that there is no policy better than $(x_1^\star,x_2^\star)$ in this case. Therefore, we assume that $q < \delta_2-\theta(c_1) \le c_2$ which follows that
	\begin{align*}
	\medskip \pi(x_1^\star)+\pi(x_2^\star) & = \pi(c_1)+\pi(\delta_2-\theta(c_1)) \\
	\medskip  & = \pi(c_1)+\pi\left(\delta_2-(\pi(c_1)-1)\bceil{Q}-\bfloor{Q}\right) \\
	\medskip  & = \pi(c_1)-(\pi(c_1)-1)+ \bcdiv{\delta_2 - \floor{Q} - q}{\ceil{Q}}+1\\
	\medskip  & = 1+\pi\left(\delta_2-\bfloor{Q}\right)\,.
	\end{align*}
	
	\noindent Note that $\delta_2 > \theta(c_1) + q > 2q$ which follows that $\delta_2-\floor{Q} > q$.
	
	\textbf{(iii) Choose $\bm{x_2 \ge \delta_2-q}$:} In this case $\delta_2-x_2 \le q$ and because $x_2 \le c_2 \le c_1$ we can write
	\begin{align*}
	\medskip V_2(x_2,\delta_2) & =\pi(x_2)+\pi(\min (c_1,\delta_2-x_2)) \\
	\medskip  & \le \pi(x_2)+\pi(\delta_2-x_2) \le \pi(x_2)+1 \\
	\medskip  & \le \pi(c_1)+1 \le  \pi(x_1^\star) + \pi(x_2^\star).
	\end{align*} 
	
	\noindent Now, let us assume that the given policy \eqref{OptSol} is optimal for $\texttt{DPF}_k$, where $k\ge 2$. We show that it is also optimal for $\texttt{DPF}_{k+1}$. For $j=k+1$ we have
	\begin{align*}
	V^\star_{k+1}(\delta_{k+1}) &= \max_{x_{k+1}\in \{1,\dots,\min(c_{k+1},\delta_{k+1})\}} V_{k+1}(x_{k+1},\delta_{k+1}),
	\end{align*}
	in which $\delta_{k+1} \ge k+1$ and
	\begin{align*}
	V_{k+1}(x_{k+1},\delta_{k+1}) &= \pi(x_{k+1}) + V^\star_{k}(\delta_{k+1}-x_{k+1}).
	\end{align*}
	Note that we assume $x_{k+1} \ge 1$ which is valid due to Lemma~\ref{Lemma_x_i_1}. From the induction hypothesis we know that Equation \eqref{OptSol} can be used to obtain $V^\star_{k}(\delta_{k+1}-x_{k+1})$. Let $x_1^\star,\dots,x_k^\star$ be the resulting optimal policy, that is,
	\begin{align*}
	x_i^\star & = \begin{cases} 
	\theta(c_i)&  i=1,\dots,{\ell_k}-1 \\
	\theta(\min(\lambda_{\ell_k}\,,c_{\ell_k})) &   i={\ell_k} \\
	1 &   i={\ell_k}+1,\dots,k,
	\end{cases}
	\end{align*}
	where $\ell_k \in \{1,\dots,k\}$ is the largest integer such that \[\lambda_{\ell_k} = (\delta_{k+1}-x_{k+1}-k+1)-\sum_{r=1}^{\ell_k-1}(\theta(c_r)-1) \ge 1.\]
	Furthermore, let $\bm{z} = (z_1,\dots,z_{k+1})$ be the solution given by Equation \eqref{OptSol} to solve $\texttt{DPF}_{k+1}$. The proof is completed if we can show that\[
	V_{k+1}(x_{k+1},\delta_{k+1}) \le \sum_{i=1}^{k+1} \pi(z_i),
	\] 
	for any $x_{k+1}\in \{1,\dots,\min(c_{k+1},\delta_{k+1})\}$. In other words, it is enough to show that the solution $\bm{x}\coloneqq(x_1^\star,\dots,x_k^\star,x_{k+1})$ is not better than $\bm{z}$. To this end, we first define the solution $\bm{u}= (u_1,\dots,u_{k+1})$ as follows:  
	\[u_i \coloneqq \begin{cases}
	x_i^\star &   i=1,\dots,\ell_k \\
	x_{k+1} &   i={\ell_k}+1 \\
	1 &   i=\ell_k+2, \dots, k+1.
	\end{cases}\]  
	This solution is illustrated in Figure \ref{fig:w_illustration}. Since $c_{k+1} \le c_{{\ell_k}+1}$\,, $\bm{u}$ is feasible. Moreover, both $\bm{x}$ and $\bm{u}$ give the same objective value.	Next we construct the feasible solution $\bm{w} = (w_1,\dots,w_{k+1})$ based on $\bm{u}$ whose objective value is better than or equal to that of $\bm{u}$. To construct such a solution, we first set $w_i=u_i$ for $i=1,\dots,k+1$ except for $i={\ell_k},{\ell_k}+1$\,(see Figure \ref{fig:w_illustration}).
	\begin{figure}[h]
		\centering
		\begin{tikzpicture}
		
		\newcommand\XX{0.9}
		\newcommand\YY{0.5}
		
		\begin{scope}[every node/.style={circle,minimum size=25pt,inner sep=0pt,font=\scriptsize}]
		\node [] (A1) at (-1*\XX,3*\YY) {$k+1$};
		\node [] (A2) at (-2*\XX,3*\YY) {$k$};
		\node [] (A4) at (-3*\XX,3*\YY) {$\dots$};
		\node [] (A5) at (-4*\XX,3*\YY) {${\ell_k}+2$};
		\node [] (A6) at (-5*\XX,3*\YY) {${\ell_k}+1$};
		\node [] (A7) at (-6*\XX,3*\YY) {${\ell_k}$};
		\node [] (A31) at (-7*\XX,3*\YY) {$\ell_{k}-1$};
		\node [] (A8) at (-8*\XX,3*\YY) {$\dots$};
		\node [] (A9) at (-9*\XX,3*\YY) {$1$};
		
		\node [] (A10) at (-9.7*\XX,2.7*\YY) {};
		\node [] (A11) at (0,2.7*\YY) {};
		
		\node [] (A12) at (-9.7*\XX,2*\YY) {$\bm{x}\colon$};
		\node [] (A13) at (-1*\XX,2*\YY) {$x_{k+1}$};
		\node [] (A14) at (-2*\XX,2*\YY) {$1$};
		\node [] (A16) at (-3*\XX,2*\YY) {$\dots$};
		\node [] (A17) at (-4*\XX,2*\YY) {$1$};
		\node [] (A18) at (-5*\XX,2*\YY) {$1$};
		\node [] (A19) at (-6*\XX,2*\YY) {$x^\star_{{\ell_k}}$};
		\node [] (A32) at (-7*\XX,2*\YY) {$\theta(c_{\ell_{k}-1})$};
		\node [] (A20) at (-8*\XX,2*\YY) {$\dots$};
		\node [] (A21) at (-9*\XX,2*\YY) {$\theta(c_1)$};
		
		\node [] (A12) at (-9.7*\XX,1*\YY) {$\bm{u}\colon$};
		\node [] (A13) at (-1*\XX,1*\YY) {$1$};
		\node [] (A14) at (-2*\XX,1*\YY) {$1$};
		\node [] (A16) at (-3*\XX,1*\YY) {$\dots$};
		\node [] (A17) at (-4*\XX,1*\YY) {$1$};
		\node [] (A18) at (-5*\XX,1*\YY) {$x_{k+1}$};
		\node [] (A19) at (-6*\XX,1*\YY) {$x^\star_{{\ell_k}}$};
		\node [] (A33) at (-7*\XX,1*\YY) {$\theta(c_{\ell_{k}-1})$};
		\node [] (A20) at (-8*\XX,1*\YY) {$\dots$};
		\node [] (A21) at (-9*\XX,1*\YY) {$\theta(c_1)$};
		
		\node [] (A22) at (-9.7*\XX,0*\YY) {$\bm{w}\colon$};
		\node [] (A23) at (-1*\XX,0*\YY) {$1$};
		\node [] (A24) at (-2*\XX,0*\YY) {$1$};
		\node [] (A25) at (-3*\XX,0*\YY) {$\dots$};
		\node [] (A26) at (-4*\XX,0*\YY) {$1$};
		\node [] (A27) at (-5*\XX,0*\YY) {$w_{\ell_k+1}$};
		\node [] (A28) at (-6*\XX,0*\YY) {$w_{{\ell_k}}$};
		\node [] (A34) at (-7*\XX,0*\YY) {$\theta(c_{\ell_{k}-1})$};
		\node [] (A29) at (-8*\XX,0*\YY) {$\dots$};
		\node [] (A30) at (-9*\XX,0*\YY) {$\theta(c_1)$};
		
		\end{scope}
		
		\begin{scope}[>={Stealth[blue]},
		every node/.style={fill=white,circle}, 
		every edge/.style={draw=black,font=\scriptsize}]
		\path [-] (A10.east) edge  (A11.west);
		\end{scope}
		\end{tikzpicture}
		\caption{\small Comparing the solution vectors $\bm{x}$, $\bm{u}$ and $\bm{w}$} \label{fig:w_illustration}
	\end{figure}
	
	\noindent It is clear that \[
	\xi \coloneqq \delta_{k+1} - \sum_{i=1}^{\ell_k-1} u_i - \sum_{i=\ell_k+2}^{k+1} u_i = (\delta_{k+1} -k+1) - \sum_{i=1}^{\ell_k-1} (\theta(c_i)-1),
	\]
	is the remaining unmet demand that can be assigned to $w_{\ell_k}$ and $w_{\ell_k+1}$. Note that $\xi \ge x^\star_{\ell_k} + x_{k+1} \ge 2$. According to the induction hypothesis for $j=2$, Equation \eqref{OptSol} gives an optimal policy for assigning $\xi$ to $w_{\ell_k}$ and $w_{\ell_k+1}$ as follows:\[
	(w_{\ell_k},w_{\ell_k+1}) = \begin{cases}
	(\theta(\min(\xi -1,c_{\ell_k})),1)  &   2 \le \xi \le \theta(c_{\ell_k}) \\
	(\theta(c_{\ell_k}),\theta(\min(\xi-\theta(c_{\ell_k}),c_{\ell_{k}+1})))&   \xi \ge \theta(c_{\ell_k})+1.
	\end{cases}\]
	Given this policy, we can conclude that\[
	V_{k+1}(x_{k+1},\delta_{k+1}) = \sum_{i=1}^{k+1} \pi(u_i) \le \sum_{i=1}^{k+1} \pi(w_i).
	\] 
	Furthermore, $\bm{w}$ coincides with Equation \eqref{OptSol} for $\texttt{DPF}_{k+1}$ which implies that $\bm{w} = \bm{z}$. This completes the proof. 
\end{proof}

We summarize the results obtained in this section in Table~\ref{tabel_closed_form_discrete}. This table presents the optimal value of the \texttt{MDF} which is a tight upper bound on the fleet size based on our assumptions. Observe that the special case of $c_1 > \Delta$ is equivalent to the scenario where at least one depot (or facility in a location-routing problem) is uncapacitated.

\begin{table}[h]
	\centering
	\begin{threeparttable}
		\caption{The proposed upper bound on the number of vehicles}
		\begin{tabular}{@{} l l @{}}
			\toprule
			Case  & Optimal value\\
			\midrule
			$0 \le \varDelta \le n$  &   
			$\varDelta$\\
			
			$n \le \varDelta < n+q$  &   
			$n$\\
			
			$n+q \le \varDelta \le c_1$  &   
			$n-1 + \pi(\varDelta-n+1)$\\
			
			$\max(n+q,c_1) \le \varDelta$ ~~ &   
			$n-1+\pi(\min(\lambda_{\ell_n},c_{\ell_n})) + \sum_{i=1}^{\ell_n-1} (\pi(c_i)-1)\tnote{\textdagger}$ \\ \bottomrule
		\end{tabular}
		\begin{tablenotes}
			\item[\textdagger]\scriptsize $\ell_n \le n$ is the largest integer such that $\lambda_{\ell_n} = (\varDelta-n+1)-\sum_{k=1}^{\ell_n-1}(\theta(c_k)-1) \ge 1$.
		\end{tablenotes}
		\label{tabel_closed_form_discrete}%
	\end{threeparttable}
\end{table}

\section{Conclusion}
\label{Conclusion}

In this paper, we studied the maximum number of homogeneous vehicles in node routing problems with integer splits. We assumed that all parameters as well as the vehicle loads are integers. Furthermore, each vehicle can make a delivery to exactly one depot, and the aggregate load for each pair of vehicles exceeds the capacity of one vehicle. We first obtained a closed-form for the optimal objective value of a maximization problem. This gives a valid upper bound based on our assumptions when there is only one predetermined depot. We then used this upper bound to develop an optimization problem whose optimal value gives a tight upper bound for multiple capacitated depots. The desirable characteristics of the proposed upper bound is that it is optimal in the sense that it gives a tight bound based on our assumptions for any given instance. Furthermore, they can be computed very efficiently in $\mathcal{O}(n\log n)$.

A future research direction for this work is to obtain stronger bounds by extending the set of assumptions or constructing different optimization models. For example, we may consider a particular routing problem and obtain better upper bounds by considering more details such as routing costs. It would also be interesting to know how we can obtain better theoretical bounds on the number of vehicles if we have a heuristic bound on the optimal routing costs. This can be helpful particularly for exact methods that seek a global optimal solution. Since theoretical bounds can be computed very efficiently, they could also prove useful for solving very large routing instances. The upper bounds obtained in this paper are also valid for node routing problems in which split deliveries are not allowed and a vehicle can deliver to any number of depots. However, these upper bounds may not be tight for those routing problems. Therefore, another research direction is to develop better theoretical bounds for those routing problems. We believe that the present work could be a good starting point for such improvements and a baseline for evaluating them. It also opens a new avenue of research for various routing problems under various assumptions such as data uncertainty and multiple objectives.

\textbf{Acknowledgments.} We would like to thank Dr. Mojtaba Heydar for discussing this problem with us. This study was funded by the Australian Research Council (Grant ID: IC140100032).

\bibliographystyle{apalike}
\bibliography{BIBLIO}
\end{document}